\documentclass{amsart}
\usepackage{verbatim}
\usepackage{amssymb}
\usepackage{amsbsy}
\usepackage{amscd}
\usepackage{amsmath}
\usepackage{amsthm}
\usepackage[mathscr]{eucal}

\newtheorem*{theorem*}{Theorem}
\newtheorem{proposition}{Proposition}

\def\R{\mathbb{R}}
\def\C{\mathbb{C}}
\def\D{\mathbb{D}}
\def\Cont{\mathcal{C}}
\def\CP{\mathbb{CP}}
\def\Aut{\mathrm{Aut}}

\def\Ker{\mathrm{Ker}}
\def\id{\mathrm{Id}}

\newcommand{\eqdef}{\mathrel{\mathop:}=}

\def\d'{\partial}

\def\bd{\partial}
\newcommand{\pa}{\partial}
\newcommand{\opa}{\overline\pa}
\newcommand{\ol}{\overline }

\renewcommand\Re{\operatorname{Re}}

\title[The Diederich--Fornaess index of Levi-flats]{On a global estimate of the Diederich--Fornaess index of Levi-flat real hypersurfaces}
\author{Masanori Adachi}
\address{%
Graduate School of Mathematics, Nagoya University, 
Furo-cho Chikusa-ku Nagoya 464-8602, Japan
}
\email{m08002z@math.nagoya-u.ac.jp}
\curraddr{Center~for~Geometry~and~its~Applications, Pohang~University~of~Science~and~Technology, Pohang 790-784, Republic of Korea
}

\subjclass[2010]{Primary 32T27; Secondary 32V15, 37F75.}
\keywords{Levi-flat, Diederich--Fornaess index, normal bundle.}
\thanks{The author is partially supported by JSPS Grant-in-Aid for Young Scientists (B) 26800057.}

\begin{document}

\begin{abstract}
In this expository paper, we review a recent progress of the study of the Diederich--Fornaess index of complex domains
with emphasis on the case of domains with Levi-flat boundary. 
It is exhibited that for any compact Levi-flat real hypersurface, 
the norm of its infinitesimal holonomy must exceed the curvature of its normal bundle at a point.
\end{abstract}

\maketitle

\section{A perspective on Levi-flats}
The study of Levi-flat real hypersurfaces has attracted both foliators and complex analysts since 1980s. 
We may say that its main driving force is the exceptional minimal set conjecture, in particular, 
the non-existence conjecture of smooth Levi-flat real hypersurface in the complex projective plane; 
this conjecture is still open despite many attempts. 

Besides this particular problem, the Levi-flat real hypersurface itself has been recognized as a quite attractive object 
within which we expect to find very subtle interaction between the theory of foliation and several complex variables. 
Around 1990, the works of Barrett and Inaba revealed this situation (see \cite{barrett1990}, \cite{barrett1992}, \cite{barrett-inaba1992}, \cite{inaba1992}). 
For example, the striking achievement in \cite{barrett1990} is that 
\textit{the standard $\Cont^\infty$-smooth Reeb foliation of $S^3$ cannot be realized as a $\Cont^\infty$-smooth Levi-flat real hypersurface in any complex surface}. 
This result not only clarifies the difference of Levi-flat manifolds and Levi-flat real hypersurfaces, 
but the method of its proof exhibits a beautiful interaction between two research fields: 
once we had a realization, 
the theory of Ueda \cite{ueda1982} allows us to connect the holonomy around its compact leaf and the pseudoconvexity of its neighborhood, 
from which we can deduce a contradiction. 
The aim of this expository paper is to illustrate a similar interaction in a different situation.

Now let us start with recalling what a Levi-flat manifold is. 
A $\Cont^k$-smooth manifold $M$ of dimension $(2n + 1)$ is said to be \textit{Levi-flat}
if it has a non-singular $\Cont^k$-smooth foliation $\mathcal{F}$ by $n$-dimensional complex leaves. 
More precisely, it is equipped with a $\Cont^k$-smooth foliated atlas $\{(U_\lambda, (z_\lambda, t_\lambda)) \}$ of $M$ such that 
\begin{itemize}
\item $(z_\lambda, t_\lambda) : U_\lambda \to \D^n \times I$ where $\D^n$ is the leaf direction and $I$ is the transverse direction.
 Here we denoted $\D = \{ z \in \C \mid |z| < 1 \}$ and $I = (-1,1)$.
\item On any intersecting charts $U_\lambda$ and $U_{\lambda'}$, the coordinate changes $z_{\lambda'} = z_{\lambda'}(z_\lambda, t_\lambda)$, 
$t_{\lambda'} = t_{\lambda'}(t_\lambda)$ are not only $\Cont^k$-smooth but also holomorphic in $z_\lambda$. 
\end{itemize}
We call $\mathcal{F}$ the {\em Levi foliation} of $M$. 
Note that any 3-manifold with an oriented foliation of codimension one has a Levi-flat structure; 
We can give it a leafwise Riemannian metric and induce isothermal coordinates on the leaves. 

A Levi-flat real hypersurface is a Levi-flat manifold realized in a complex manifold as a real hypersurface. 
More precisely, for $k \geq 2$, a $\Cont^k$-smooth closed real hypersurface $M$ in a complex manifold $X$ is said to be {\em Levi-flat} 
if it is foliated by complex hypersurfaces of $X$. 
The integrable distribution $T\mathcal{F}$ of its Levi foliation $\mathcal{F}$ is given by the maximal complex subbundle $TM \cap J_X TM$ of $TM$ 
where $J_X$ denotes the complex structure of $X$. 
We often identify $T\mathcal{F} \subset TX|M$ with $T^{1,0}\mathcal{F} \subset T^{1,0}X|M$ in the standard way. 
When we locally express $M$ on a neighborhood $U$ of $p \in M$ as a zero set of a real-valued $\Cont^k$-smooth function $\rho$ 
using the implicit function theorem, 
the Levi-flatness of $U \cap M$ is equivalent to the vanishing of a quadratic form called the \textit{Levi form of $\rho$ along $U \cap M$}, 
namely, 
\[
i\pa\opa \rho (\zeta, \ol{\zeta}) = 0 \text{\quad for any $\zeta \in T^{1,0}_q \mathcal{F} = \Ker (\pa\rho)_q, q \in U \cap M$}.
\]
Note that Barrett and Forn{\ae}ss showed in \cite{barrett-fornaess1988} that 
the Levi foliation is actually of $\Cont^k$ while the Frobenius' theorem only proves it is of $\Cont^{k-1}$. 

If a possibly singular holomorphic foliation of codimension one on a complex manifold has a saturated smooth real hypersurface, it is Levi-flat. 
Conversely, if one has a real-analytic Levi-flat real hypersurface in a complex manifold, its Levi foliation 
extends to a non-singular holomorphic foliation defined on a neighborhood of the real hypersurface (see \cite{rea1972}).

\section{A quantitative estimate on Levi-flat real hypersurfaces}
Let us state the goal of this paper. It is to explain a theorem that follows from results 
obtained in \cite{brunella2008}, \cite{adachi_local}, \cite{ohsawa-sibony1998}, \cite{adachi-brinkschulte}, \cite{fu-shaw}.

\begin{theorem*}
Let $M$ be an oriented compact $\Cont^4$-smooth Levi-flat manifold of dimension $(2n + 1)$, $n \geq 1$. 
Denote its Levi foliation by $\mathcal{F}$.
Suppose $M$ can be realized in an $(n+1)$-dimensional complex manifold $X$ as the boundary of a relatively compact domain $\Omega \Subset X$.
Then, for any transverse measure $\mu$ of $\mathcal{F}$ with $\Cont^3$-smooth positive density with respect to transverse Lebesgue measures, 
there exists a non-zero vector $\zeta \in T^{1,0}\mathcal{F}$ at which $iA_\mu(\zeta, \ol{\zeta}) \geq n i\Theta_\mu(\zeta, \ol{\zeta})$ holds. 
\end{theorem*}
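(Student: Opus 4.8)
The plan is to route everything through the Diederich--Fornaess index of $\Omega$, which is the quantity that links the two intrinsic tensors $A_\mu$ and $\Theta_\mu$. First I would fix a defining function $\rho$ of $M=\pa\Omega$ with $\rho<0$ on $\Omega$ and $d\rho\neq0$ on $M$, and recall that $\Omega$ has Diederich--Fornaess index $\geq\eta$ precisely when, for a suitable $\rho$, the bracket in
$$ i\pa\opa\bigl(-(-\rho)^{\eta}\bigr)=\eta(-\rho)^{\eta-1}\Bigl[\,i\pa\opa\rho+(1-\eta)(-\rho)^{-1}\,i\pa\rho\wedge\opa\rho\,\Bigr] $$
is positive semidefinite on a one-sided neighbourhood of $M$. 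Testing this bracket on $v=\zeta+c\nu$ with $\zeta\in T^{1,0}\mathcal{F}$ and $\nu$ a complex normal field normalized by $\pa\rho(\nu)=1$, then minimizing over $c$, the Levi-flatness $i\pa\opa\rho(\zeta,\ol\zeta)=0$ on $M$ pushes the leading obstruction into the derivative of the Levi form transverse to $M$. Dividing by $(-\rho)$ and letting $\rho\to0^-$ isolates the pointwise necessary condition $(1-\eta)\,\pa_\nu\bigl(i\pa\opa\rho(\zeta,\ol\zeta)\bigr)\geq\bigl|i\pa\opa\rho(\zeta,\ol\nu)\bigr|^{2}$.

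Second, I would read both sides intrinsically by means of the transverse measure. Following \cite{adachi_local}, the $\Cont^3$-smooth positive density of $\mu$ furnishes a Hermitian metric on the complex normal bundle $N_{\mathcal{F}}=T^{1,0}X|_M/T^{1,0}\mathcal{F}$, whose leafwise Chern curvature is $i\Theta_\mu$; the transverse derivative of the Levi form, computed in this normalization, is the infinitesimal holonomy $iA_\mu$; and the mixed term $i\pa\opa\rho(\zeta,\ol\nu)$ is matched, through the Levi-flat structure equations, to the curvature $i\Theta_\mu$. The necessary condition above then becomes a sharp-in-the-limit upper bound for the Diederich--Fornaess index in terms of the leafwise ratio of $i\Theta_\mu$ to $iA_\mu$.

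Third comes the global comparison. Because $\Omega\Subset X$ is relatively compact with Levi-flat, hence weakly pseudoconvex, boundary, the exhaustion supplied by Ohsawa--Sibony \cite{ohsawa-sibony1998} yields a positive lower bound for the index. I would confront this lower bound with the pointwise upper bound from the previous step, now integrated over the compact $M$ against the measure assembled from $\mu$ and the leafwise volume; the trace over the $n$ complex leaf directions is what promotes the naive constant to the factor $n$, in the spirit of the global estimates of \cite{adachi-brinkschulte} and \cite{fu-shaw}, while \cite{brunella2008} controls the total normal-bundle curvature entering the average. If $iA_\mu(\zeta,\ol\zeta)<n\,i\Theta_\mu(\zeta,\ol\zeta)$ held at every point and in every leaf direction, the averaged inequality would drive the index below the Ohsawa--Sibony threshold, a contradiction; hence the asserted $\zeta$ must exist at some point.

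The main obstacle is the sharp constant. Obtaining $n$ rather than a weaker dimensional constant requires, first, the precise identification of $i\pa\opa\rho(\zeta,\ol\nu)$ with $\Theta_\mu$ and of the transverse derivative with $A_\mu$, which rests on the Levi-flat structure equations and is exactly where the $\Cont^4$-regularity is consumed, since two derivatives of the Levi form must be controlled; and, second, performing the limit $\rho\to0^-$ and the averaging against $\mu$ simultaneously, so that the cross terms and the $O(-\rho)$ remainders are shown not to erode the constant. Pinning the Ohsawa--Sibony lower bound against this averaged upper bound tightly enough to recover precisely the factor $n$ is the delicate heart of the argument.
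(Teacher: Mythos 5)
Your first two steps are essentially the paper's route: the computation of $i\pa\opa(-(-\rho)^\eta)$ tested on $\zeta+c\nu$ and the identification of the resulting terms with $iA_\mu$ and $i\Theta_\mu$ is exactly the content of Propositions \ref{brunella} and \ref{adachi} (the local formula $\eta_\mu(p)=\sup\{\eta : i\Theta_\mu-\tfrac{\eta}{1-\eta}iA_\mu>0\}$ from \cite{adachi_local}). One caveat there: you have the dictionary swapped. The transverse derivative of the Levi form (which vanishes on $M$ by Levi-flatness) is what produces the normal-bundle curvature $i\Theta_\mu$, while the squared mixed term $|i\pa\opa\rho(\zeta,\ol\nu)|^2$ is the rank-one form $iA_\mu$ built from $\pa\log h$, i.e.\ the infinitesimal holonomy. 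With your labels interchanged the necessary condition would come out with the wrong tensor on the wrong side.

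The genuine gap is in your third step, which is where the constant $n$ actually has to be produced. Ohsawa--Sibony gives only that the Diederich--Fornaess index is \emph{positive}; there is no quantitative ``Ohsawa--Sibony threshold'' to pin an averaged upper bound against, and no averaging of the pointwise condition over $M$ against $\mu$, nor any ``trace over the $n$ leaf directions,'' appears in (or would suffice for) the argument. The paper's mechanism is entirely different: the hypothesis $iA_\mu<n\,i\Theta_\mu$ everywhere is equivalent, via the local formula and $\tfrac{\eta}{1-\eta}=\tfrac1n$ at $\eta=\tfrac1{n+1}$, to the index exceeding $1/(n+1)$ --- note your direction is reversed, since the assumed inequality drives the index \emph{up}, not below a threshold --- and the contradiction comes from the global \emph{upper} bound $\eta_\delta\le 1/(n+1)$ of Fu--Shaw \cite{fu-shaw} and Adachi--Brinkschulte \cite{adachi-brinkschulte}. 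That bound is proved by a Stokes'-theorem/Monge--Amp\`ere volume argument: with $\omega_\eta=i\pa\opa(-\widetilde\delta^{\,\eta})$ one computes $V(t)=\int_{\Omega_t}\omega_\eta^{n+1}=O(t^{(n+1)\eta-1})$ as $t\searrow 0$, because the $n$-fold wedge of the Levi form degenerates to order $t^n$ on the Levi-flat boundary; for $\eta>1/(n+1)$ this forces $V(0)=0$, contradicting the nontriviality of the Monge--Amp\`ere measure. That exponent count is where $n+1$ comes from, and it is absent from your proposal; without it the argument does not close.
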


Our main ingredients, globally defined leafwise $(1,1)$-forms $iA_\mu$ and $i\Theta_\mu$ on $M$, are defined by 
their local expressions
\begin{align*}
iA_\mu (= iA_h) &=i\sum_{\alpha, \beta =1}^n \frac{\pa \log h}{\pa z^{\alpha}} \frac{\pa \log h}{\pa \ol{z}^\beta} dz^{\alpha} \wedge d\ol{z}^{\beta}, \\
i\Theta_\mu (= i\Theta_h) &=i\sum_{\alpha, \beta =1}^n \frac{\pa^2  (-\log h)}{\pa z^{\alpha} \pa \ol{z}^\beta} dz^{\alpha} \wedge d\ol{z}^{\beta}
\end{align*}
on each foliated chart $(z,t)$ where $h$ denotes the density of $\mu$ with respect to the Lebesgue measure $|dt|$: $\mu = h(z,t) |dt|$ on each transversal $\{z\} \times I$.
Note that we are identifying these leafwise $(1,1)$-forms with quadratic forms defined on $T^{1,0}\mathcal{F}$. 
The quadratic form $iA_\mu$ expresses a certain norm of the infinitesimal holonomy of the Levi foliation with respect to $\mu$.
On the other hand, $i\Theta_\mu$ can be regarded as the leafwise Chern curvature of the holomorphic normal bundle $N^{1,0}$ of $M$.
Here the \textit{holomorphic normal bundle} $N^{1,0}$ of $M$ is defined by
\[
N^{1,0} \eqdef (T^{1,0}X|M)/T^{1,0}\mathcal{F} \simeq \C \otimes TM/T\mathcal{F}
\]
and it is easily seen that $h^2$ defines a $\Cont^3$-smooth hermitian metric of $N^{1,0}$ thanks to its transition rule. 

We are going to give a sketch of a proof of Theorem in the following two sections.
The key idea is to translate $iA_\mu$ and $i\Theta_\mu$, geometric information of holonomy of $\mathcal{F}$, into 
the Diederich--Fornaess index, a numerical information of pseudoconvexity of the bounded domain $\Omega$
via the holomorphic normal bundle $N^{1,0}$ (see \S3). This idea is based on Brunella \cite{brunella2008}
and developed by the author in \cite{adachi_local}. 
After that, we will deduce the conclusion by relying on a global estimate of the Diederich--Fornaess index 
obtained independently by the author and Brinkschulte \cite{adachi-brinkschulte} and by Fu and Shaw \cite{fu-shaw} (see \S4).

\section{First half of the proof: Moving onto the complement}

The proof is by contradiction. Let us suppose the contrary:
for any non-zero $\zeta \in T^{1,0}\mathcal{F}$, $iA_\mu(\zeta, \ol{\zeta}) < n i\Theta_\mu(\zeta, \ol{\zeta})$ holds. 
This particularly implies that the holomorphic normal bundle $N^{1,0}$ is positive: $i\Theta_\mu$ defines a positive-definite
quadratic form on $T^{1,0}_p\mathcal{F}$ for every $p \in M$. We first apply a construction originating from Brunella \cite{brunella2008}. 

\begin{proposition}[cf. \cite{brunella2008}]
\label{brunella}
There exists a $\Cont^3$-smooth real-valued function $\delta$ defined on a neighborhood $U \supset M$
such that
\begin{enumerate}
\item $\delta$ is a \textit{$\Cont^3$-smooth defining function} of $M$, i.e., $0$ is a regular value of $\delta$ and $M = \delta^{-1}(0)$. 
\item There exists a hermitian metric $\omega$ of $X$ such that $i\pa\opa(-\log |\delta|) > \omega $ holds on $U \setminus M$.
\end{enumerate}
\end{proposition}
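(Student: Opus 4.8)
The plan is to manufacture $\delta$ directly from the transverse measure $\mu$, and then to read off property (2) from the pointwise identity
$i\pa\opa(-\log|\delta|)=i\frac{\pa\delta\wedge\opa\delta}{\delta^2}-i\frac{\pa\opa\delta}{\delta}$,
valid on $U\setminus M$ for any real-valued $\delta$. The first term on the right is a nonnegative $(1,1)$-form that blows up in the direction transverse to $M$, while the second term, obtained by dividing the Levi form of $\delta$ by $\delta$, is exactly where the curvature $i\Theta_\mu$ of the normal bundle will surface. The whole point is therefore to choose $\delta$ so that these two contributions together dominate a fixed hermitian metric $\omega$ on both sides of $M$.

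First I would use, following Brunella \cite{brunella2008} and \cite{adachi_local}, that $h^2$ is a genuine hermitian metric on the holomorphic normal bundle $N^{1,0}$, so that the transverse coordinate $t$ of each foliated chart, reweighted by $h$, patches into a globally defined signed defining function. Concretely, on each chart one takes a defining function whose restriction to transversals is $t$ and rescales it by $h$; because the transverse transition maps respect the leaves and $h^2$ obeys the transition rule of a metric on $N^{1,0}$, the resulting local functions agree up to positive $\Cont^3$ units and glue to a single $\delta$ on a neighborhood $U\supset M$, with $0$ a regular value and $M=\delta^{-1}(0)$. This gives (1), and I expect the verification of the transition rule to be routine once the definition of $h$ is unwound.

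For (2) I would evaluate both terms of the identity along $M$ and control them nearby. Leaf directions $\zeta\in T^{1,0}\mathcal{F}$ lie in $\Ker(\pa\delta)$ on $M$, so the first term contributes nothing there; and the Levi form $\pa\opa\delta(\zeta,\ol\zeta)$ vanishes on $M$ precisely by Levi-flatness, so the quotient $-i\pa\opa\delta/\delta$ has a finite limit, computed by the transverse derivative of the Levi form, which is exactly the leafwise Chern curvature $i\Theta_\mu$ of $(N^{1,0},h^2)$. In the transverse complex direction the first term blows up like $|\delta|^{-2}$, as in the model $i\pa\opa(-\log|\operatorname{Im}w|)=\frac{i}{4(\operatorname{Im}w)^2}\,dw\wedge d\ol w$. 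The mixed leaf–transverse entries are of size $|\delta|^{-1}$, since $\pa\delta(\zeta)$ vanishes on $M$ but has nonzero normal derivative; eliminating them by a Schur complement against the divergent transverse entry leaves the leaf block corrected by a bounded term built from $\pa\log h$, comparable to $iA_\mu$. A judicious choice of the extension of the leaf distribution off $M$ (equivalently, of $\delta$) makes this correction small enough that the standing hypothesis $iA_\mu<n\,i\Theta_\mu$ keeps the corrected leaf block positive-definite. Since $M$ is compact, there are then $\omega$ and $\varepsilon>0$ with the corrected leaf block $\geq\varepsilon\,\omega$ along $M$; this persists on a neighborhood by continuity, the transverse blow-up handles the remaining direction, and shrinking $U$ yields $i\pa\opa(-\log|\delta|)>\omega$ on $U\setminus M$. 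As the identity is invariant under $\delta\mapsto-\delta$, positivity holds on both sides of $M$.

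The main obstacle is exactly this control of the mixed terms up to $M$. Because $M$ is only $\Cont^4$, its Levi foliation does not extend to a holomorphic foliation of a neighborhood (as it would in the real-analytic case), so any extension of the leaf distribution used to define $\delta$ off $M$ is merely $\Cont$-smooth and feeds error terms of size $|\delta|^{-1}$ into the Hessian. Showing that, after the Schur complement, these errors assemble into a bounded $iA_\mu$-type form rather than an unbounded term that could swamp $i\Theta_\mu$ near $M$ is the crux; it is here that the metric $h^2$ on $N^{1,0}$, the flatness of the transverse structure, and the $\Cont^3$-regularity of the density $h$ are all used. This is the technical content isolated in \cite{adachi_local}; everything else is either Brunella's construction or a compactness argument.
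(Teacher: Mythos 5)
Your construction of $\delta$ and your overall strategy --- asymptotics of $i\pa\opa(-\log|\delta|)$ along normal lines, leaf block tending to $i\Theta_\mu$, transverse entry blowing up like $|\delta|^{-2}$, then compactness of $M$ --- coincide with the paper's, which produces $\delta$ by normalizing a transverse vector field with $h$, rotating by $J_X$ and integrating; your ``glue the $h$-weighted transverse coordinates'' variant needs a partition of unity (functions agreeing only up to positive units do not literally glue), but it targets the same $1$-jet of $\delta$ along $M$, which is all that matters. The genuine problem is your treatment of the mixed leaf--transverse entries. You assert they are of size $|\delta|^{-1}$, so that the Schur complement against the $|\delta|^{-2}$ transverse entry corrects the leaf block by a bounded term ``comparable to $iA_\mu$,'' and you then invoke the standing hypothesis $iA_\mu < n\, i\Theta_\mu$ to keep positivity. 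That hypothesis only yields $i\Theta_\mu - \tfrac{1}{n}iA_\mu > 0$; if the correction really were $c\, iA_\mu$ with $c$ of order $1$, positivity of $i\Theta_\mu - c\, iA_\mu$ would fail for $n \geq 2$ at a point where $iA_\mu$ is close to $n\, i\Theta_\mu$ in some direction. Your escape clause --- that ``a judicious choice of the extension'' makes the correction small enough --- is exactly the unproved crux, and you give no reason such a choice exists or any bound on $c$.

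What actually happens, and what the paper extracts from the computation of \cite{adachi_local} (the case $\eta = 0$ of Theorem 1.1 there), is sharper: for the $h$-normalized $\delta$ and in the distinguished parametrization, the mixed entries satisfy $y_{n+1}L_{\alpha,\ol{n+1}}(0,iy_{n+1}) \to 0$, i.e.\ they are $o(|\delta|^{-1})$, so the Schur correction vanishes in the limit and the leading term of $i\pa\opa(-\log|\delta|)$ is exactly $i\Theta_h \oplus i\pa\delta\wedge\opa\delta/\delta^2$. Consequently only the positivity of $i\Theta_\mu$ is used in this proposition, not the full inequality $iA_\mu < n\, i\Theta_\mu$; the bounded $\tfrac{\eta}{1-\eta}\,iA_\mu$-type correction you have in mind genuinely appears only for $i\pa\opa(-|\delta|^\eta)$ with $\eta > 0$, which is the content of Proposition \ref{adachi}, not of this one. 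To close your argument you must either exhibit the cancellation in the mixed terms (this is where the $h$-normalization of $\delta$ and the distinguished parametrization are actually used) or compute the constant $c$ and verify $c \leq 1/n$; as written, this step is a gap.
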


We give a simplified construction in our rather restricted setting.
\begin{proof}
First take a $\Cont^3$-smooth non-vanishing section of $TM/T\mathcal{F}$ and normalize it using $h^2$, the hermitian metric of $N^{1,0}$
induced from $\mu$, and lift it to a section of $TX|M$, say $\xi$. 
We rotate the transversal vector field $\xi$ by the complex structure $J_X$ and obtain a normal vector field $\nu = J_X \xi$ of $M$. 
Consider arbitrary $\Cont^3$-smooth extension of $\nu$ on a neighborhood of $M$, denoted by $\nu$ again. 

Now we integrate the vector field $\nu$ and obtain a $\Cont^3$-diffeomorphism, say $\phi: M \times (-\varepsilon, \varepsilon) \to \widetilde{U} \subset X$. 
Let $\delta = \mathrm{proj_2} \circ \phi^{-1}: \widetilde{U} \to (-\varepsilon, \varepsilon)$. 
It is clear that $\delta$ is a $\Cont^3$-smooth defining function of $M$.

The remaining problem is to check the condition (2). From what we supposed, it follows that $i\Theta_h$ defines a leafwise hermitian metric of $M$.
Using the smooth decomposition $T^{1,0}X|M \simeq T^{1,0}\mathcal{F} \oplus N^{1,0}$ defined by $\xi$, 
we can construct a hermitian metric of $T^{1,0}X$, say $\omega$, so that $\omega = (i\Theta_h \oplus i\pa \delta \wedge \opa\delta)/100$ on $M$. 
For this $\omega$, we will show $i\pa\opa(-\log |\delta|) > \omega$ on some smaller neighborhood $U$ of $M$. 

We will work locally to show this estimate, namely, show it on $\Gamma_p$ for each $p \in M$ 
where $\Gamma_p \subset \widetilde{U} \setminus M$ is a small neighborhood of the normal curve $\phi(p, t)$ $(0<|t| \ll 1)$ specified later. 
To accomplish it, we will exploit the \textit{distinguished parametrization} in \cite{adachi_local},
which allows us to treat $M$ locally as if it is a real-analytic Levi-flat real hypersurface when we compute leafwise objects. 
Now let us fix $p \in M$ and take a distinguished parametrization of $M \subset X$ in a local coordinate $(V, z)$ of $X$ around $p$, 
say $\varphi: \D^{n} \times I \hookrightarrow V$. 
(You may assume the real-analyticity of $M$ and just consider the natural inclusion $\varphi: \D^n \times I \hookrightarrow \D^{n+1}$ 
to understand the essence of this proof.)
We can assume that $\varphi_*((\pa_t)_{(0,0)}) = \xi_p$ by a linear transformation where we denote the transversal coordinate $t \in I$. 
We will choose a sufficiently small neighborhood $\Gamma_p$ of $\{ (0, iy_{n+1}) \in V \subset \C^n \times \C \mid 0 < |y_{n+1}| \ll 1 \}$ in $V \setminus M$ later.

We start to estimate $i\pa\opa(-\log |\delta|)$ on this $\Gamma_p$. 
Let us put $i\pa\opa(-\log |\delta|) = i\sum_{\alpha, \beta=1}^{n+1} L_{\alpha,\ol{\beta}} dz^{\alpha}\wedge d\ol{z}^\beta$ on $V \setminus M$.
Letting $\eta = 0$ in the computation in the proof of Theorem 1.1 of \cite{adachi_local} gives us 
\begin{align*}
L_{\alpha\ol{\beta}}(0,iy_{n+1}) &\to \frac{\d'^2 (-\log h)}{\d'z^\alpha \d'\ol{z}^\beta}(p), \\
y_{n+1} L_{\alpha,\ol{n+1}}(0,iy_{n+1}) &\to 0, \\
y^2_{n+1} L_{n+1,\ol{n+1}}(0,iy_{n+1}) &\to \frac{1}{4}
\end{align*}
as $(0, iy_{n+1}) \to (0,0) (= p)$ where $1 \leq \alpha, \beta \leq n$. 
Here we have used the fact that the converse of Brunella's construction in \cite{adachi_local} actually gives the inverse map: 
the hermitian metric of $N^{1,0}$ induced from $\delta$ agrees with the original $h^2$ thanks to the normalization of $\nu$. 

This limiting behavior shows that $i\Theta_h \oplus i\pa \delta \wedge \opa\delta/\delta^2$ is 
the main term of the asymptotics of $i\pa\opa(-\log |\delta|)$ along the normal line toward $p$. 
Comparing this asymptotics with the equality $\omega = (i\Theta_h \oplus i\pa \delta \wedge \opa\delta)/100$ on $M$
and using continuity of the forms, 
it is therefore possible to bound $i\pa\opa(-\log |\delta|)$ from below by $\omega$ on a sufficiently small $\Gamma_p$. 
\end{proof}

Now we have passed from given transversal measure $\mu$ of $\mathcal{F}$ 
to the defining function $\delta$ of $M$ with the aid of the holomorphic normal bundle $N^{1,0}$. 
Next, we are going to find a counterpart to the geometric information of the holonomy of $\mathcal{F}$
in the complement of $M$. 
Here we recall the finding of Ohsawa and Sibony in \cite{ohsawa-sibony1998}.

\begin{proposition}[cf. \cite{ohsawa-sibony1998}. See also \cite{harrington-shaw}]
\label{ohsawa-sibony}
There exists a $\Cont^3$-smooth real-valued function $\widetilde{\delta}$ defined on a neighborhood of $\ol{\Omega}$
such that 
\begin{enumerate}
\item Two functions $\delta$ and $\widetilde{\delta}$ agree on a neighborhood of $M$. 
\item There exists an $\eta \in (0,1]$ such that $i\pa\opa(-|\widetilde{\delta}|^\eta) \geq 0$ on $\Omega$ 
and $i\pa\opa(-|\widetilde{\delta}|^\eta) > 0$ on $W \cap \Omega$ where $W$ is a neighborhood of $M$. 
\end{enumerate}
\end{proposition}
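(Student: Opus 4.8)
The plan is to exhibit a bounded plurisubharmonic function of the form $-|\widetilde{\delta}|^\eta$ on $\Omega$ that coincides with $-|\delta|^\eta$ near $M$, and then recover $\widetilde{\delta}$ from it; the whole difficulty is concentrated in a one-sided neighbourhood of $M$. Since $\delta < 0$ on $\Omega$, I would first examine $\Phi \eqdef -|\delta|^\eta$ for a small $\eta \in (0,1)$ and use the pointwise identity
\[
i\pa\opa\Phi = \eta(1-\eta)|\delta|^{\eta-2}\, i\pa\delta\wedge\opa\delta + \eta|\delta|^{\eta-1}\, i\pa\opa\delta ,
\]
which reduces the plurisubharmonicity of $\Phi$ to the positivity of the Hermitian form $\tfrac{1-\eta}{|\delta|}\,i\pa\delta\wedge\opa\delta + i\pa\opa\delta$ on $T^{1,0}X$.

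The heart of the argument is to verify this near $M$. Here I would feed in the asymptotics recorded in the proof of Proposition \ref{brunella} (from \cite{adachi_local}): along the normal line to $p$, in the decomposition $T^{1,0}X|M \simeq T^{1,0}\mathcal{F} \oplus N^{1,0}$, the leaf-direction block of $i\pa\opa(-\log|\delta|)$ converges to $i\Theta_\mu(p)$, the normal block blows up like $|\delta|^{-2}$, and the mixed block is of lower order. Translating these into the form above, the normal block of $i\pa\opa\Phi$ blows up positively like $|\delta|^{\eta-2}$, its mixed block is of order $|\delta|^{\eta-1}$, and its leaf-direction block is positive of size comparable to $\eta|\delta|^\eta\, i\Theta_\mu$; crucially $i\Theta_\mu > 0$ by the standing hypothesis of this proof by contradiction. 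A Schur-complement estimate then shows that the mixed block lowers the leaf-direction positivity only by a term of order $\eta^2|\delta|^\eta$, so that for $\eta$ small enough $i\pa\opa\Phi > 0$ holds on a thin shell $W \cap \Omega$ with $W$ a neighbourhood of $M$. Compactness of $M$ makes the choice of $\eta$ and the thickness of $W$ uniform.

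To pass from this local statement to all of $\Omega$, I would glue $\Phi$ with a plurisubharmonic function covering the interior. In the present restricted setting this can be done by hand: fixing a small $\epsilon > 0$ with $\{\,|\delta| < \epsilon^{1/\eta}\,\} \subset W$, set $\widetilde{\Phi}$ to be the regularized maximum of $\Phi$ and the constant $-\epsilon$ on $W \cap \Omega$, and $\widetilde{\Phi} \eqdef -\epsilon$ on the remaining interior; since $\Phi > -\epsilon$ near $M$ and $\Phi < -\epsilon$ on the outer part of the shell, the two definitions agree and produce a $\Cont^3$-smooth plurisubharmonic $\widetilde{\Phi} < 0$ on $\Omega$ that equals $\Phi$ on a smaller neighbourhood of $M$. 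Defining $\widetilde{\delta} \eqdef -(-\widetilde{\Phi})^{1/\eta}$ and extending it by $\delta$ across $M$, one checks $\widetilde{\delta} = \delta$ near $M$ and $-|\widetilde{\delta}|^\eta = \widetilde{\Phi}$, which yields both required properties; this realizes, in a simplified form, the principle of \cite{ohsawa-sibony1998} that the pseudoconvex $\Omega$ has positive Diederich--Fornaess index.

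The main obstacle is the near-boundary estimate of the second paragraph rather than the gluing: it is exactly the interplay between the normal direction, where $i\pa\opa\Phi$ blows up, and the leaf directions, where positivity must come from $i\Theta_\mu$, with the mixed block threatening to spoil positivity; controlling that mixed block through the $O(\eta^2)$ Schur-complement bound, uniformly over the compact $M$, is what forces $\eta$ to be small and is the delicate point. Keeping $\widetilde{\delta}$ genuinely $\Cont^3$ after the regularized maximum and the extraction of the $1/\eta$-th root is a secondary technical matter, handled by the smoothness of the regularized maximum and by the fact that $\widetilde{\Phi}$ stays strictly negative.
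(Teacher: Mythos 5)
Your overall architecture coincides with the paper's: establish strict plurisubharmonicity of $-|\delta|^{\eta}$ in a one-sided collar of $M$ using the strong Oka estimate $i\pa\opa(-\log|\delta|)>\omega$ from Proposition \ref{brunella}, then flatten to a constant in the interior. The paper does the flattening by composing $-\delta^{\eta}$ with a convex non-decreasing $\psi$; your regularized maximum with the constant $-\epsilon$, followed by extraction of the $1/\eta$-th root of $-\widetilde{\Phi}$ (legitimate since $\widetilde{\Phi}<0$), is an equivalent device. The real difference is that the paper delegates the collar estimate entirely to \cite{ohsawa-sibony1998} and \cite{harrington-shaw}, whereas you attempt to prove it by hand from the asymptotics in the proof of Proposition \ref{brunella} --- and it is there that your bookkeeping does not close as written.

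You assert that the mixed (leaf--normal) block of $i\pa\opa\Phi$ is of order $|\delta|^{\eta-1}$ and that the Schur-complement correction to the leaf block is $O(\eta^{2}|\delta|^{\eta})$. But a mixed block of size $C|\delta|^{\eta-1}$ with $C$ independent of $\eta$, against a normal block of size $\tfrac{\eta(1-\eta)}{4}|\delta|^{\eta-2}$, produces a Schur correction of size $\tfrac{4C^{2}}{\eta(1-\eta)}|\delta|^{\eta}$, which for small $\eta$ \emph{dominates} the available leaf positivity $\eta|\delta|^{\eta}\,i\Theta_{\mu}$; shrinking $\eta$ then makes matters worse, not better. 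The estimate is rescued only by two refinements your write-up elides: (i) every block of $i\pa\opa\Phi=\eta(1-\eta)|\delta|^{\eta-2}i\pa\delta\wedge\opa\delta+\eta|\delta|^{\eta-1}i\pa\opa\delta$ carries an overall factor $\eta$, so the mixed block is really $\eta|\delta|^{\eta-1}m$; and (ii) after dividing out $\eta|\delta|^{\eta-1}$, the quantity $m=\tfrac{1-\eta}{|\delta|}(\pa\delta)_{L}\ol{(\pa\delta)_{N}}+(i\pa\opa\delta)_{LN}$ equals $|\delta|$ times the mixed block of $i\pa\opa(-\log|\delta|)$ plus an $O(\eta)$ term, and the former is $o(1)$ precisely because of the asymptotic $y_{n+1}L_{\alpha,\overline{n+1}}\to 0$ quoted in Proposition \ref{brunella} --- not merely ``of lower order'' in a qualitative sense. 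Only with both (i) and (ii) does the correction become $\eta|\delta|^{\eta}\bigl(o(1)+O(\eta)\bigr)^{2}$ up to a bounded factor, which is then beaten by $\eta|\delta|^{\eta}i\Theta_{\mu}$ for $\eta$ and $|\delta|$ small, uniformly over the compact $M$. So your route is viable and essentially reconstructs the content of the cited Ohsawa--Sibony/Harrington--Shaw lemma, but the stated $O(\eta^{2}|\delta|^{\eta})$ bound is unjustified as it stands, and this order-of-magnitude accounting in the mixed block is exactly the delicate point the citation is covering. The remainder of your argument (uniformity by compactness, the gluing, the smoothness of $\widetilde{\delta}$) is correct.
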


\begin{proof}
By the argument in \cite{ohsawa-sibony1998} or \cite{harrington-shaw}, 
the existence of $\eta$ such that $i\pa\opa(-|\delta|^\eta) > 0$ on $W \setminus M$ 
follows from $i\pa\opa(-\log |\delta|) > \omega $ on $U \setminus M$ where $W \subset U$ is a smaller neighborhood of $M$.

We may suppose that $\delta > 0$ on $W \cap \Omega$ by changing the sign of $\delta$ if necessary. 
To extend $\delta$ to $\widetilde{\delta}$ on a neighborhood of $\ol{\Omega}$, we let
\[
\widetilde{\delta}(p) = \begin{cases}
\delta(p) & \text{for $p \in W$ with $\delta(p) \leq \varepsilon'/3$}, \\
(-\psi(-\delta(p)^\eta))^{1/\eta} & \text{for $p \in W$ with $\varepsilon'/3 \leq \delta(p) \leq \varepsilon'$}, \\
\varepsilon'/2 & \text{for other points of $\Omega$}. 
\end{cases}
\]
for sufficiently small $\varepsilon' > 0$ 
where $\psi: \R \to \R$ is a $\Cont^3$-smooth non-decreasing convex function such that 
$\psi(t) = t$ for $t \geq -(\varepsilon'/3)^\eta$ and $\psi(t) = -(\varepsilon'/2)^\eta$ for $t \leq -(\varepsilon')^\eta$. 
Then, one can see that $\widetilde{\delta}$ is the desired one. 
\end{proof}

The supremum of $\eta$ appearing in Proposition \ref{ohsawa-sibony} is denoted by $\eta_\delta$ 
and called the \emph{Diederich--Fornaess exponent} of $\delta$. 
The \emph{Diederich--Fornaess index} of a relatively compact domain $\Omega$ is defined to be the supremum of $\eta_\delta$ where 
we consider all the defining functions $\delta$ of $M = \bd\Omega$ satisfying the conditions in Proposition \ref{brunella}. 

This numerical index, the Diederich--Fornaess exponent $\eta_\delta$ of $\delta$, is the counterpart in $\Omega$ to
the ratio of $iA_\mu$ and $i\Theta_\mu$, a geometric information of the holonomy of $\mathcal{F}$.
The author showed in \cite{adachi_local} the following formula. 

\begin{proposition}[cf. \cite{adachi_local}]
\label{adachi}
We have an equality
\[
\eta_\delta = \eta_\mu \eqdef \min_{p \in M} \eta_\mu(p)
\]
where $\eta_\mu(p)$ is called the local Diederich--Fornaess exponent and defined by 
\[
\eta_\mu(p) \eqdef \sup \{ \eta \in (0,1) \mid i\Theta_\mu(p) - \frac{\eta}{1-\eta}iA_\mu(p) > 0 \}.
\]
\end{proposition}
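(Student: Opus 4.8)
The plan is to localize the question near the boundary $M$ and to reduce the Diederich--Fornaess positivity of $-|\delta|^\eta$ to the leafwise inequality defining $\eta_\mu(p)$ by means of a Schur complement. First I would record the elementary pointwise identity, valid for $\eta\in(0,1)$ on the side where $\delta>0$,
\[
i\pa\opa(-\delta^\eta) = \eta\,\delta^{\eta}\Bigl(i\pa\opa(-\log\delta) - \eta\, i\pa(-\log\delta)\wedge\opa(-\log\delta)\Bigr),
\]
so that, since $\eta\delta^\eta>0$, the condition $i\pa\opa(-\delta^\eta)\geq 0$ at a point is equivalent to positivity there of the Hermitian form $i\pa\opa(-\log\delta)-\eta\,i\pa(-\log\delta)\wedge\opa(-\log\delta)$. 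Because the extension in Proposition \ref{ohsawa-sibony} alters $\delta$ only away from $M$, the exponent $\eta_\delta$ is governed by the behavior of this form on a neighborhood of $M$; hence it suffices to determine, for each $p\in M$, the supremum of $\eta$ for which the form stays positive as one approaches $p$ along the normal curve $\phi(p,t)$.

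Next I would feed in the asymptotics already quoted in the proof of Proposition \ref{brunella}. Working in the distinguished parametrization $\varphi$ of \cite{adachi_local} around $p$, I would write the coefficient matrix of the above form in the block shape induced by the splitting $T^{1,0}X|M \simeq T^{1,0}\mathcal{F}\oplus N^{1,0}$: an $n\times n$ leafwise block $P$, a cross column $q$, and a normal scalar $r$. The limits $L_{\alpha\ol{\beta}}\to \pa^2(-\log h)/\pa z^\alpha\pa\ol{z}^\beta$, $y_{n+1}L_{\alpha,\ol{n+1}}\to 0$ and $y_{n+1}^2 L_{n+1,\ol{n+1}}\to 1/4$, combined with the normalization $\delta=\mathrm{Im}\,z^{n+1}$ to leading order (coming from $\varphi_*(\pa_t)=\xi_p$ and $\nu=J_X\xi$), pin down each block: the normal entry gives $y_{n+1}^2 r\to(1-\eta)/4$, forcing $\eta<1$, while the leafwise logarithmic derivative $\delta_\alpha/\delta$ for $\alpha\le n$ tends, up to sign, to $\pa\log h/\pa z^\alpha$. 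This last identification, which converts the transverse variation of the leaves into the infinitesimal holonomy, is where I would invoke the converse-to-Brunella statement of \cite{adachi_local} matching the metric induced by $\delta$ with $h^2$.

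The decisive step is the Schur complement $P-qq^{*}/r$. Substituting the limits, the two rank-one contributions combine through $\eta+\eta^2/(1-\eta)=\eta/(1-\eta)$, and the Schur complement converges exactly to
\[
i\Theta_\mu(p) - \frac{\eta}{1-\eta}\,iA_\mu(p),
\]
where $i\Theta_\mu$ is read off from the leafwise block $P$ and $iA_\mu=(\pa\log h/\pa z^\alpha)(\pa\log h/\pa\ol{z}^\beta)$ from the cross terms. Thus $i\pa\opa(-\delta^\eta)$ is positive near $p$ precisely when this leafwise form is positive, so the pointwise supremum of admissible $\eta$ equals $\eta_\mu(p)$. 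Taking the worst point then yields $\eta_\delta=\min_{p\in M}\eta_\mu(p)=\eta_\mu$: if $\eta<\eta_\mu$ the limiting form is positive at every $p$, and by compactness of $M$ together with continuity the form $i\pa\opa(-\delta^\eta)$ is positive on a whole neighborhood of $M$, giving $\eta\le\eta_\delta$; conversely, if $\eta>\eta_\mu(p_0)$ at a minimizer $p_0$, the Schur complement becomes negative along the normal curve into $p_0$ and positivity fails arbitrarily close to $M$, giving $\eta\ge\eta_\delta$.

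The main obstacle I anticipate is the identification $\delta_\alpha/\delta\to\pa\log h/\pa z^\alpha$ and, more generally, the justification that the leading-order asymptotics along the single normal curve actually control the sign of $i\pa\opa(-\delta^\eta)$ on a full neighborhood of $M$, uniformly in $p$. One must show that the lower-order terms and the off-normal directions do not spoil the estimate, which is exactly what the $\Cont^3$ regularity and the distinguished-parametrization technique of \cite{adachi_local} are designed to supply. Some additional care is needed to handle the boundary cases of the supremum and minimum and the passage between strict and non-strict inequalities.
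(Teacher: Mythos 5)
Your proposal is correct and follows essentially the same route as the paper, whose own ``proof'' is only the remark that the result follows by a direct computation with the distinguished parametrization and the asymptotics already quoted in the proof of Proposition~\ref{brunella}, with details deferred to \cite{adachi_local}. Your identity $i\pa\opa(-\delta^\eta)=\eta\delta^\eta\bigl(i\pa\opa(-\log\delta)-\eta\, i\pa(-\log\delta)\wedge\opa(-\log\delta)\bigr)$, the block limits $y_{n+1}^2 r\to(1-\eta)/4$ and $\delta_\alpha/\delta\to\pa\log h/\pa z^\alpha$, and the Schur-complement combination $\eta+\eta^2/(1-\eta)=\eta/(1-\eta)$ reproduce exactly the computation the paper is alluding to, so you have in effect supplied the omitted details rather than taken a different path.
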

\begin{proof}
By a direct computation exploiting the same technique used in the proof of Proposition \ref{brunella}. 
We refer the reader to \cite{adachi_local} for the detail.
\end{proof}

\section{Latter half of the proof: An estimate on the complement}

By Proposition \ref{adachi} and a simple computation, we now know that what we supposed is, in fact, equivalent to $\eta_\delta = \eta_\mu > 1/(n+1)$.
This is absurd from the following global estimate and we complete the proof.

\begin{proposition}[cf. \cite{adachi-brinkschulte} and \cite{fu-shaw}]
The Diederich--Fornaess exponent $\eta_\delta$ should not exceed $1/(n+1)$.
\end{proposition}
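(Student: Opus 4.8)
The plan is to argue by contradiction on the domain side, exactly as the statement is phrased. Assume $\eta_\delta > 1/(n+1)$ and fix $\eta \in (1/(n+1), \eta_\delta)$ together with the globally defined function $\widetilde{\delta}$ furnished by Proposition \ref{ohsawa-sibony}, so that $\rho \eqdef -|\widetilde{\delta}|^\eta$ is $\Cont^3$-smooth, bounded, plurisubharmonic on $\Omega$, and strictly plurisubharmonic on the collar $W \cap \Omega$. It is convenient to reformulate the plurisubharmonicity in terms of $v \eqdef -\log|\widetilde\delta|$, which blows up to $+\infty$ along $M$: a direct computation shows that $i\pa\opa \rho \geq 0$ is equivalent to the matrix inequality
\[
i\pa\opa v - \eta\, i\pa v \wedge \opa v \geq 0 \quad\text{on } W \cap \Omega .
\]
The whole game is to see that the rank-one correction $\eta\, i\pa v \wedge \opa v$ is too large to be absorbed by $i\pa\opa v$ once $\eta$ exceeds $1/(n+1)$.

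The key step is a global integration argument carried down to the Levi-flat boundary. I would integrate a suitable positive top-degree form built from the inequality above over the level sets $\{\widetilde\delta = \varepsilon\}$, or over the collar $\{0 < \widetilde\delta < \varepsilon'\}$, and let $\varepsilon \to 0$, applying Stokes' theorem at each stage. Two structural features drive the estimate. First, since $\dim_\C X = n+1$, the form $i\pa v \wedge \opa v$ has rank one among $n+1$ complex directions while the positivity must hold in all of them; this is the origin of the combinatorial factor $n+1$. Second, along $M$ the Levi form degenerates completely, so the only transverse positivity sits in the single normal direction, and the boundary asymptotics of $i\pa\opa v$ and $\pa v$ are precisely those recorded in the proof of Proposition \ref{brunella}: the leafwise block tends to $i\Theta_\mu$, while the normal entry blows up like the square of the normal component of $\pa v$. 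Feeding these asymptotics into the integrated inequality, and using that the leafwise self-intersection of the curvature integrates to zero against the invariant transverse measure $\mu$, which is a foliated Stokes identity valid precisely because $\mu$ is holonomy-invariant, one is left with an inequality in which only the nonnegative holonomy term survives; it forces $\eta(n+1) \leq 1$, contradicting the choice of $\eta$. Equivalently, this is the special case $q = 0$, $N = n+1$ of the general principle that a smoothly bounded pseudoconvex domain whose boundary Levi form has rank at most $q$ has Diederich--Fornaess index at most $1/(N-q)$.

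The main obstacle is the passage to the boundary. The forms $i\pa\opa v$ and $i\pa v \wedge \opa v$ both blow up as $\widetilde\delta \to 0$, so the integration by parts and the limit $\varepsilon \to 0$ must be justified with care, and only $\Cont^3$ regularity is available; this is where the distinguished parametrization of \cite{adachi_local} and the precise rate computations reproduced in Proposition \ref{brunella} become indispensable. A secondary difficulty is extracting the sharp constant: one must verify that the $n+1$ complex directions genuinely contribute the stated factor rather than a weaker one, which amounts to checking that the normal direction saturates the rank-one bound while the $n$ leafwise directions contribute through a curvature term that averages to zero. Once these asymptotic and averaging steps are in place, the contradiction, and hence the bound $\eta_\delta \leq 1/(n+1)$, follows.
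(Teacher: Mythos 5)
Your overall strategy --- argue by contradiction, integrate over the level sets of $\widetilde{\delta}$, apply Stokes' theorem, let the level tend to the boundary, and extract the factor $n+1$ from the interplay between the $n+1$ complex directions and the degenerate Levi form on $M$ --- is indeed the shape of the paper's (Fu--Shaw's) argument. But the proposal stops short of a proof at exactly the point where the proof lives: you never identify the quantity to be integrated. The paper's argument hinges on the specific choice $V(t) = \int_{\Omega_t} \omega_\eta^{n+1}$ with $\omega_\eta = i\pa\opa(-\widetilde{\delta}^{\eta})$ and $\Omega_t = \{\widetilde{\delta} > t\}$: the Monge--Amp\`ere measure $\omega_\eta^{n+1}$ is non-negative and \emph{non-trivial} (because $\omega_\eta > 0$ on a collar $W \cap \Omega$), so $V(0) > 0$; on the other hand Stokes' theorem gives $V(t) = \eta^{n+1} t^{(n+1)(\eta-1)} \int_{\bd\Omega_t} \frac{\pa-\opa}{2i}(-\delta) \wedge (i\pa\opa(-\delta)|\Ker\pa\delta)^n$, and the vanishing of the Levi form of $\delta$ on $T^{1,0}\mathcal{F}$ makes the integral $O(t^n)$, whence $V(t) = O(t^{(n+1)\eta - 1}) \to 0$ when $\eta > 1/(n+1)$. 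The contradiction is $0 < V(0) = \lim_{t \searrow 0} V(t) = 0$. Saying ``integrate a suitable positive top-degree form'' does not supply this; the choice of $\omega_\eta^{n+1}$ (equivalently, of the exact form $d\bigl(\frac{\pa-\opa}{2i}(-\widetilde{\delta}^{\eta}) \wedge \omega_\eta^n\bigr)$) and the observation that its total mass is both positive and zero \emph{is} the proof.

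Moreover, the mechanism you propose for closing the argument is not correct. The step ``the leafwise self-intersection of the curvature integrates to zero against the invariant transverse measure $\mu$, which is a foliated Stokes identity valid precisely because $\mu$ is holonomy-invariant'' has no counterpart in a valid proof: the measure $\mu$ of the Theorem is merely a transverse measure with smooth positive density and is \emph{not} holonomy-invariant (invariance would force $iA_\mu \equiv 0$ and, with it, $i\Theta_\mu \equiv 0$), and this Proposition is a statement purely about the domain $\Omega$ and the defining function $\delta$ in which $\mu$ does not appear at all. Likewise, the delicate boundary asymptotics of $i\pa\opa(-\log|\delta|)$ from Proposition \ref{brunella}, which you flag as indispensable, are not needed here; the only boundary input is the qualitative fact that the Levi form of $\delta$ vanishes on $T^{1,0}\mathcal{F}$ along $M$, which yields the $O(t^n)$ decay. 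The exponent $1/(n+1)$ comes from the elementary power count $t^{(n+1)(\eta-1)} \cdot t^{n} = t^{(n+1)\eta - 1}$, not from an averaging identity. As written, the proposal correctly names the destination and several landmarks but omits the one construction that gets there, so it has a genuine gap.
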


We recollect an elegant argument of Fu and Shaw \cite{fu-shaw} in our setting here. 
(See also Nemirovski{\u\i} \cite{nemirovski}. For another proof under a stronger assumption, see \cite{adachi-brinkschulte}.)

\begin{proof}
Suppose that $\eta_\delta > 1/(n+1)$. Then, we find $\eta > 1/(n+1)$ in Proposition \ref{ohsawa-sibony}. 
We assume $\delta > 0$ on $\Omega$ by taking its negative if necessary. 
Let $\omega_\eta = i\pa\opa(-\widetilde{\delta}^\eta)$, $\Omega_t = \{ p \in \Omega \mid \widetilde{\delta}(p) > t \}$ and 
\[
V(t) = \int_{\Omega_t} \omega_\eta^{n+1}.
\]
From the choice of $\eta$, $\omega_\eta^{n+1}$ defines a non-negative non-trivial measure on $\Omega$. 
Hence, it is clear that $V(t)$ is non-negative and non-increasing and, in particular, $V(0) \in (0, \infty]$. 

On the other hand, by Stokes' theorem, for $0 < t \ll 1$, 
\begin{align*}
V(t) 
&= \int_{\Omega_t} d( \frac{\pa - \opa}{2i}(-\widetilde{\delta}^{\eta})\wedge \omega_\eta^{n})\\
&= \int_{\bd\Omega_t} \frac{\pa - \opa}{2i}(-\delta^{\eta})\wedge \omega_\eta^{n}.
\end{align*}
By a direct computation, 
\begin{align*}
V(t) 
&= \int_{\bd\Omega_t} (\eta\delta^{\eta-1})^{n+1}  \frac{\pa - \opa}{2i}(-\delta) \wedge (i\pa\opa (-\delta))^n\\
&=  \eta^{n+1} \frac{t^{(n+1)\eta}}{t^{n+1}} \int_{\bd\Omega_t} \frac{\pa - \opa}{2i}(-\delta) \wedge (i\pa\opa (-\delta))^n. \\
&=  \eta^{n+1} \frac{t^{(n+1)\eta}}{t^{n+1}} \int_{\bd\Omega_t} \frac{\pa - \opa}{2i}(-\delta) \wedge (i\pa\opa (-\delta)|\Ker \pa\delta)^n. 
\end{align*}
Recall that the Levi-flatness is equivalent to the vanishing of the Levi form along $M$, 
$i\pa\opa (-\delta)|T^{1,0}\mathcal{F} = 0$ on $M$. We therefore have 
\[
V(t) = \eta^{n+1}  \frac{t^{(n+1)\eta}}{t^{n+1}} O(t^n) = O\left(\frac{t^{(n+1)\eta}}{t}\right)
\]
as $t \searrow 0$. Then, the choice of $\eta$ shows that $V(0) = 0$. This is a contradiction. 
\end{proof}

\section{An Example}
In this section, we illustrate our Theorem by an explicit example, a flat circle bundle over a compact Riemann surface.
We will consider a transverse measure $\mu$ having positive normal bundle curvature
and see that its local Diederich--Fornaess exponent distributes around $1/2$. 

Let $\Sigma$ be a compact Riemann surface of genus $\geq 2$. 
Fix an identification of its universal covering with the unit disk $\D$
and express $\Sigma = \D/\Gamma$ by a Fuchsian group $\Gamma$.
Let $\rho: \Gamma \to \Aut(\D)$ be a quasi-conformal deformation of $\Gamma$.
We suspend the unit circle $\bd\D$ over $\Sigma$ by $\rho$ and obtain a flat circle bundle, 
say $M_\rho = \Sigma \times_\rho \bd\D \to \Sigma$. Pulling back the complex structure of $\Sigma$ on each leaf,
we regard $M_\rho$ as an oriented compact real-analytic Levi-flat 3-manifold.
We can realize $M_\rho$ in a compact complex surface as a real-analytic Levi-flat real hypersurface. 
That is because we can suspend $\CP^1$ over $\Sigma$ by $\rho$, say $X_\rho = \Sigma \times_\rho \CP^1 \to \Sigma$ 
and $M_\rho$ is naturally identified with the boundary of the holomorphic disc bundle $\Omega_\rho = \Sigma \times_\rho \D \to \Sigma$.

Now we follow the idea of Diederich and Ohsawa \cite{diederich-ohsawa1985} and construct a transverse measure $\mu$ on $M_\rho$.
Let $s: \D \to \D$ be the $\rho$-equivariant harmonic diffeomorphism with respect to the Poincar\'e metric, that is, 
the $\rho$-equivariant solution to the Euler--Lagrange equation 
\[
s_{z\ol{z}} + 2\frac{\ol{s(z)}}{1-|s(z)|^2} s_z s_{\ol z} = 0.
\]
Note that $s$ defines a section of $\Omega_\rho \to \Sigma$. 
We consider a transverse measure $\mu$ given by 
\[
\mu = h(z, e^{i\theta})d\theta = \frac{1-|s(z)|^2}{|e^{i\theta} - s(z)|^2} d\theta.
\]
on a foliated chart given by the covering map $(z, e^{i\theta}) \in \D \times \bd \D \to M_\rho$. 
The equivariance of $s$ assures that $\mu$ descends to $M_\rho$. Note that the function $h$ is exactly the Poisson kernel. 

By a direct computation using the Euler--Lagrange equation and the harmonicity of the Poisson kernel, we have 
\begin{align*}
\frac{\partial}{\partial z} \log h
&= \frac{1}{1-|s(z)|^2}\left( \frac{1-e^{i\theta}\ol{s(z)}}{e^{i\theta}-s(z)} s_z(z) + \frac{1-e^{-i\theta}s(z)}{e^{-i\theta}-\ol{s(z)}} \ol{s_{\ol{z}}(z)} \right), \\
\frac{\partial^2}{\partial z \partial \ol{z}} (-\log h)
&= \frac{1}{(1 - |s(z)|^2)^2} \left| \frac{1-e^{i\theta}\ol{s(z)}}{e^{i\theta}-s(z)} s_z(z) - \frac{1-e^{-i\theta}s(z)}{e^{-i\theta}-\ol{s(z)}} \ol{s_{\ol{z}}(z)} \right|^2. 
\end{align*}
In a fiber over $z \in \Sigma$, if we choose its fiber coordinate so that $s(z) = 0$, we have 
\begin{align*}
iA_{\mu}(z, e^{i\theta}) &= \left| e^{-i\theta} s_z(z) + e^{i\theta} \ol{s_{\ol{z}}(z)} \right|^2 idz \wedge d\ol{z},\\
i\Theta_{\mu}(z, e^{i\theta}) &= \left| e^{-i\theta} s_z(z) - e^{i\theta} \ol{s_{\ol{z}}(z)} \right|^2 idz \wedge d\ol{z}
\end{align*}
and the expression of the local Diederich--Fornaess exponent $\eta_\mu$ of $\mu$ is given by
\[
\eta_\mu(z, e^{i\theta}) = \frac{i\Theta_\mu}{i\Theta_\mu + iA_\mu} = \frac{1}{2} - \frac{\Re (e^{-2i\theta}s_z s_{\ol{z}})}{|s_z|^2 + |s_{\ol{z}}|^2}.
\]

We remark that when $\rho = \id: \Gamma \hookrightarrow \Aut(\D)$, the harmonic diffeomorphism $s = \id: \D \to \D$ 
becomes biholomorphic, and $i\Theta_{\mu} = iA_{\mu}$ and 
$\eta_\mu = 1/2$ hold everywhere. It might be of interest that this $\mu$ corresponds to the foliated harmonic measure of $M_{\id}$.

\subsection*{\bf Acknowledgements.}
The author is grateful to a referee for his/her careful reading of the manuscript.

\end{document}